\documentclass[a4paper,10pt,notitlepage]{article}
\usepackage[margin=3cm]{geometry}
\usepackage{amsfonts}
\usepackage{amsmath}
\usepackage{amssymb}
\usepackage{amsthm}
\usepackage{float}
\newtheorem{thm}{Theorem}
\newtheorem{lemma}[thm]{Lemma}
\newtheorem{prop}[thm]{Proposition}
\newtheorem{question}[thm]{Question}
\newtheorem{conj}[thm]{Conjecture}

\begin{document}

\title{Tiling with punctured intervals}
\author{Harry Metrebian}
\date{}
\maketitle

\begin{abstract}
	\noindent
	It was shown by Gruslys, Leader and Tan that any finite subset of $\mathbb{Z}^n$ tiles $\mathbb{Z}^d$ for some $d$. The first non-trivial case is the punctured interval, which consists of the interval $\{-k,\ldots,k\} \subset \mathbb{Z}$ with its middle point removed: they showed that this tiles $\mathbb{Z}^d$ for $d = 2k^2$, and they asked if the dimension needed tends to infinity with $k$. In this note we answer this question: we show that, perhaps surprisingly, every punctured interval tiles $\mathbb{Z}^4$.
\end{abstract}

\section{Introduction}

A \emph{tile} is a finite non-empty subset of $\mathbb{Z}^n$ for some $n$. We say that a tile $T$ \emph{tiles} $\mathbb{Z}^d$ if $\mathbb{Z}^d$ can be partitioned into copies of $T$, that is, subsets that are translations, rotations or reflections, or any combination of these, of $T$.

For example, the tile $\texttt{X.X} = \{-1,1\} \subset \mathbb{Z}$ tiles $\mathbb{Z}$. The tile $\texttt{XX.XX} = \{-2,-1,1,2\} \subset \mathbb{Z}$ does not tile $\mathbb{Z}$, but we can also regard it as a tile in $\mathbb{Z}^2$, and indeed it tiles $\mathbb{Z}^2$, as shown, for example, in \cite{gltan16}.

Chalcraft \cite{chalcraft1,chalcraft2} conjectured that, for any tile $T \subset \mathbb{Z}^n$, there is some dimension $d$ for which $T$ tiles $\mathbb{Z}^d$. This was proved by Gruslys, Leader and Tan \cite{gltan16}. The first non-trivial case is the \emph{punctured interval} $T = \underbrace{\texttt{XXXXX}}_{k}\!\texttt{.}\!\underbrace{\texttt{XXXXX}}_{k}$. The authors of \cite{gltan16} showed that $T$ tiles $\mathbb{Z}^d$ for $d = 2k^2$, but they were unable to prove that the smallest required dimension $d$ was quadratic in $k$, or even that $d \to \infty$ as $k \to \infty$. They therefore asked the following question:

\begin{question}[Gruslys, Leader, Tan \cite{gltan16}]
	Let $T$ be the punctured interval $\underbrace{\texttt{\emph{XXXXX}}}_{k}\!\texttt{.}\!\underbrace{\texttt{\emph{XXXXX}}}_{k}$, and let $d$ be the least number such that $T$ tiles $\mathbb{Z}^d$. Does $d \to \infty$ as $k \to \infty$?
\end{question}

In this paper we will show that, rather unexpectedly, $d$ does not tend to $\infty$:

\begin{thm}\label{mainthm}
	Let $T$ be the punctured interval $\underbrace{\texttt{\emph{XXXXX}}}_{k}\!\texttt{.}\!\underbrace{\texttt{\emph{XXXXX}}}_{k}$. Then $T$ tiles $\mathbb{Z}^4$. Furthermore, if $k$ is odd or congruent to $4 \pmod 8$, then $T$ tiles $\mathbb{Z}^3$.
\end{thm}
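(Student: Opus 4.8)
The plan is to reduce the whole theorem to a finite statement: it suffices to exhibit, for each $k$, a tiling of some box $[a_1]\times\cdots\times[a_d]\subset\mathbb{Z}^d$ by copies of $T$, with $d=4$ in general and $d=3$ when $k$ is odd or $k\equiv 4\pmod 8$. Indeed, the translates of such a box by the sublattice $a_1\mathbb{Z}\times\cdots\times a_d\mathbb{Z}$ partition $\mathbb{Z}^d$, and each translate inherits the box tiling. Since the theorem asserts only the existence of tilings, no lower-bound or colouring argument is needed; the entire content is construction, and I would not try to optimise the recursive $2k^2$-dimensional construction of \cite{gltan16} but instead look for a genuinely low-dimensional one built from scratch.

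The heart of the matter is therefore to build these box tilings, and I would do this in two stages: first assemble a bounded number of copies of $T$ into an intermediate "brick" that is close to being a box, and then assemble translates and reflections of the brick into an honest box. The obstruction to working in few dimensions is precisely the puncture. Two punctured intervals cannot in general be fitted together into a single unpunctured interval — laid end to end they either overlap or leave two holes — so in a tiling the copies of $T$ must genuinely interlock: each puncture has to be covered by the body of a copy running in a perpendicular direction, whose own puncture is covered by a further copy, and so on, and the extra coordinate directions exist precisely to let this chain of "routings" of the puncture close up into a finite gadget. For $d=4$ there should be enough room to close the chain after passing through all four axis directions; the intermediate brick I would hunt for is something only mildly more complicated than a box — say a box with a couple of cells displaced, or a thickened staircase — chosen so that the punctures of its perpendicular copies cancel in pairs and so that reflected copies of it retile a genuine box. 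The divisibility requirement $2k\mid a_1\cdots a_d$ is easy to meet, so the real work is geometric.

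I expect the construction of the basic gadget to be the main obstacle: it is a concrete combinatorial puzzle whose solvability in three rather than four dimensions is delicate, and I would anticipate finding the right pattern by working out small cases by hand and then verifying that an explicit parametrised version works for every $k$. The congruence conditions on $k$ should emerge from this analysis — the general gadget uses all four axis directions, but when $k$ is odd or $k\equiv 4\pmod 8$ a $2$-adic coincidence should let two of the routings be amalgamated so that three directions suffice. Accordingly I would prove the $\mathbb{Z}^4$ statement first, in full generality, and then describe separately the modification that compresses the construction to $\mathbb{Z}^3$ under the stated hypothesis on $k$.
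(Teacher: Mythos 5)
Your reduction in the first paragraph is sound as far as it goes (a tiling of a finite box does extend periodically to all of $\mathbb{Z}^d$), but it replaces the theorem by a strictly stronger statement that you never prove and that the paper does not prove either: that the punctured interval tiles some finite box in $\mathbb{Z}^4$, respectively $\mathbb{Z}^3$. Nothing forces a tile that tiles $\mathbb{Z}^d$ to tile a box --- many polyominoes tile the plane but no rectangle --- and the paper's construction is emphatically not of this form. It is built from \emph{strings}: infinite lines with every $(k+1)$th point removed, each a disjoint union of copies of $T$, arranged so that their pattern is periodic modulo $k+1$ in the string directions. Since a single copy of $T$ has diameter $2k+1$, larger than the period $k+1$, these copies necessarily straddle every fundamental domain; the resulting tiling of $\mathbb{Z}^4$ (or $\mathbb{Z}^3$) is periodic but is not obtained by repeating a tiled box. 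So the ``finite statement'' you reduce to is not known to be true, may well be false, and in any case carries the entire burden of the proof.

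That burden is never discharged: the ``brick that is close to being a box'', the claim that the punctures of perpendicular copies ``cancel in pairs'', the assertion that four directions give enough room to close the chain of routings, and the guess that the hypothesis $k$ odd or $k\equiv 4\pmod 8$ will emerge from a $2$-adic coincidence are all hopes, not arguments; no explicit configuration parametrised by $k$ is exhibited, so there is nothing to verify. Compare this with what the actual proof must (and does) supply: a filling lemma placing copies of $T$ in one extra coordinate direction above a $3$-point set with exactly two points covered in every slice; partial tilings of the discrete torus $\mathbb{Z}_{k+1}^2$ by strings leaving prescribed holes (one point per row or column, or a single point); for general $k$ a second filling lemma over a $3k$-point set together with a carefully chosen sequence $a_1,\dots,a_{3k}$ in $\mathbb{Z}_{k+1}^2$ so that every $k+1$ consecutive points form a hole; and for $k\equiv 4\pmod 8$ three explicit partial tilings $X_1,X_2,X_3$ of $\mathbb{Z}^2$ whose complements interleave correctly. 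Some substitute for this concrete machinery --- in particular an explicit gadget and a proof that it exists for every $k$, plus a genuine derivation of the congruence condition --- is exactly what is missing, so the proposal is a plan rather than a proof.
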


We have already noted that \texttt{X.X} tiles $\mathbb{Z}$, and \texttt{XX.XX} tiles $\mathbb{Z}^2$ but not $\mathbb{Z}$. It can be shown via case analysis that, for $k \geq 3$, the tile $T$ does not tile $\mathbb{Z}^2$. However, this proof is tedious and provides little insight, and since it is not the focus of this paper, we omit it. For odd $k \geq 3$ and for $k \equiv 4 \pmod 8$, 3 is therefore the least $d$ such that $T$ tiles $\mathbb{Z}^d$. For the remaining cases, namely $k \equiv 0, 2, 6 \pmod 8$, $k \geq 6$, it is unknown whether the least possible $d$ is 3 or 4.

In this paper, we will first prove the result for odd $k$. This will introduce some key ideas, which we will develop to prove the result for general $k$, and then to improve the dimension from 4 to 3 for $k \equiv 4 \pmod 8$.

Finally, we give some background. Tilings of $\mathbb{Z}^2$ by polyominoes (edge-connected tiles in $\mathbb{Z}^2$) have been thoroughly investigated. For example, Golomb \cite{golomb70} showed that results of Berger \cite{berger66} implied that there is no algorithm which decides whether copies of a given finite set of polyominoes tile $\mathbb{Z}^2$. It is unknown whether the same is true for tilings by a single polyomino. For tilings of $\mathbb{Z}$ by sets of general one-dimensional tiles, such an algorithm does exist, as demonstrated by Adler and Holroyd \cite{ah81}. Kisisel \cite{kisisel01} introduced an ingenious technique for proving that certain tiles do not tile $\mathbb{Z}^2$ without having to resort to case analysis.

A similar problem is to consider whether a tile $T$ tiles certain finite regions, such as cuboids. There is a significant body of research, sometimes involving computer searches, on tilings of rectangles in $\mathbb{Z}^2$ by polyominoes (see, for example, Conway and Lagarias \cite{cl90} and Dahlke \cite{dahlke}). Friedman \cite{friedman} has collected some results on tilings of rectangles by small one-dimensional tiles. More recently, Gruslys, Leader and Tomon \cite{gltomon16} and Tomon \cite{tomon16} considered the related problem of partitioning the Boolean lattice into copies of a poset, and similarly Gruslys \cite{gruslys16} and Gruslys and Letzter \cite{gl16} have worked on the problem of partitioning the hypercube into copies of a graph.

\section{Preliminaries and the odd case}

We begin with the case of $k$ odd. This is technically much simpler than the general case, and allows us to demonstrate some of the main ideas in the proof of Theorem \ref{mainthm} in a less complicated setting.

\begin{thm}\label{kodd}
	Let $T$ be the punctured interval $\underbrace{\texttt{\emph{XXXXX}}}_{k}\!\texttt{.}\!\underbrace{\texttt{\emph{XXXXX}}}_{k}$, with $k$ odd. Then $T$ tiles $\mathbb{Z}^3$.
\end{thm}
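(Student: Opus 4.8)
The plan is to produce an explicit periodic tiling. Since any tiling of a finite quotient $\mathbb{Z}_a\times\mathbb{Z}_b\times\mathbb{Z}_c$ by copies of $T$ lifts to a periodic tiling of $\mathbb{Z}^3$ — a copy of $T$ lying along a coordinate direction of length $2k+1$ becomes the whole cyclic line with a single point deleted, and such a set lifts back to a genuine copy of $T$ in $\mathbb{Z}$ — it suffices to tile a suitably chosen such quotient. I would take two of the side lengths equal to $2k+1$, which makes the combinatorics of copies of $T$ as clean as possible, and the remaining side length a large multiple of $2k=|T|$, so that the volume is divisible by $|T|$.

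The raw material is a small collection of clusters of copies of $T$ that tile simple ``punctured'' boxes. For example, $2k$ copies of $T$ all oriented along one axis, together with a single copy along a second axis placed so as to fill exactly the holes left by the first $2k$, tile the region $T\times\{-k,\dots,k\}$; iterating this and translating shows that the slab $T\times\mathbb{Z}\subseteq\mathbb{Z}^2$ is tileable, and a comparable argument tiles a punctured square by a handful of copies. I would assemble such clusters globally as follows: fix one coordinate direction, use copies of $T$ along that direction to cover everything except a controlled, periodic set of ``holes'' in each line, and then cover the holes inside each transverse plane by copies of $T$ lying in that plane. This reduces the three-dimensional problem to a two-dimensional one — choosing a colouring of a $(2k+1)\times(2k+1)$ grid by $2k+1$ colours so that every colour class can be tiled by copies of $T$ living in $\mathbb{Z}^2$ — which is then settled by an explicit construction.

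The two-dimensional step is the main obstacle, and it is where the hypothesis that $k$ is odd is used. Within a single plane there is a parity/divisibility constraint — essentially the reason $T$ fails to tile $\mathbb{Z}^2$ for $k\ge 3$ — so the colour classes cannot be the obvious slab regions; they must be genuinely two-dimensional shapes whose tileability is verified by a further small explicit construction, and the counting only closes up when $k$ is odd. I would therefore spend the bulk of the argument exhibiting the colouring together with a tiling of each colour class, and then checking that the induced arrangement in $\mathbb{Z}^3$ is a genuine partition with the periods matching around the quotient; periodicity then upgrades it to a tiling of $\mathbb{Z}^3$. The reduction to a periodic pattern, the ``fill the lines, then patch the transverse planes'' strategy, and the device of clusters tiling punctured boxes are exactly the ideas to be reused, with extra work to handle the parity, in the proof of Theorem~\ref{mainthm} and in the sharper statement when $k\equiv 4\pmod 8$.
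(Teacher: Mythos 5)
Your framework (work in a finite quotient, lift periodically, fill most of space with copies of $T$ along one direction and patch the leftover holes transversally) is sound in outline, and the small cluster you describe ($2k$ parallel copies of $T$ plus one orthogonal copy tiling $T\times\{-k,\dots,k\}$) is correct. But the proposal has a genuine gap exactly where the theorem lives: the two-dimensional step. You reduce everything to ``choose a colouring of a $(2k+1)\times(2k+1)$ grid by $2k+1$ colours so that every colour class can be tiled by copies of $T$ in $\mathbb{Z}^2$, which is then settled by an explicit construction'' --- and that construction is never given, nor is any argument that such a colouring exists, nor any precise statement of how the hypothesis that $k$ is odd makes ``the counting close up''. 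Since $T$ does not tile $\mathbb{Z}^2$ for $k\ge 3$, the existence of such colour classes is precisely the delicate point; asserting it is assuming the theorem's content. As written, the proposal is a strategy sketch, not a proof.

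For comparison, the paper's proof fills each two-dimensional slice (not each line) almost completely, using \emph{strings} of period $k+1$ (a line with every $(k+1)$th point removed is a disjoint union of copies of $T$), working in the torus $\mathbb{Z}_{k+1}^2$ where a string is a row or column minus a point. Each slice is tiled except for a ``hole'' $X_n$ of size $k+1$ having one point in every row or column, and the holes are then filled by copies of $T$ running in the third direction: Lemma \ref{biglemma} shows that over any triple $S$ of points one can place vertical copies of $T$ occupying exactly two of the three points of $S$ in every slice, in a three-phase pattern of period $3k$. Oddness of $k$ enters only through the fact that $|X_n|=k+1$ is even, so the hole can be split into $(k+1)/2$ such triples (e.g.\ $\{(2i,2i),(2i,2i+1),(2i+1,2i+1)\}$), with each slice's hole consisting of two points from each triple. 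This is the concrete mechanism your sketch is missing; note also that the paper's periods are $k+1$ in the toral directions and $3k$ in the free direction, not $2k+1$ throughout, and that the roles of ``lines'' and ``planes'' are transposed relative to your plan (strings fill the planes, the third direction does the patching).
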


Throughout this section, $T$ is fixed, and $k \geq 3$. We will not yet assume that $k$ is odd, because the tools that we are about to develop will be relevant to the general case too.

We start with an important definition from \cite{gltan16}: a \emph{string} is a one-dimensional infinite line in $\mathbb{Z}^d$ with every $(k+1)$th point removed. Crucially, a string is a disjoint union of copies of $T$.

We cannot tile $\mathbb{Z}^d$ with strings, as each string intersects $[k+1]^d$ in either 0 or $k$ points, and $(k+1)^d$ is not divisible by $k$. However, we could try to tile $\mathbb{Z}^d$ by using strings in $d-1$ of the $d$ possible directions, leaving holes that can be filled with copies of $T$ in the final direction. We therefore consider $\mathbb{Z}^d$ as consisting of slices equivalent to $\mathbb{Z}^{d-1}$, each of which will be partially tiled by strings.

Any partial tiling of the discrete torus $\mathbb{Z}_{k+1}^{d-1} = (\mathbb{Z}/(k+1)\mathbb{Z})^{d-1}$ by lines with one point removed corresponds to a partial tiling of $\mathbb{Z}^{d-1}$ by strings. We will restrict our attention to these tilings at first, as they are easy to work with.

We will call a set $X \subset \mathbb{Z}_{k+1}^{d-1}$ a \emph{hole} in $\mathbb{Z}_{k+1}^{d-1}$ if $\mathbb{Z}_{k+1}^{d-1} \setminus X$ can be tiled with strings. One particularly useful case of this is when $d = 3$ and $X$ either has exactly one point in each row of $\mathbb{Z}_{k+1}^2$ or exactly one point in each column of $\mathbb{Z}_{k+1}^2$. Then $X$ is clearly a hole, since a string in $\mathbb{Z}_{k+1}^2$ is just a row or column minus a point.

The following result will allow us to fill the gaps in the final direction, assuming we have chosen the partial tilings of the $\mathbb{Z}^{d-1}$ slices carefully:

\begin{lemma}\label{biglemma}
	Let $S \subset \mathbb{Z}^d$, $|S| = 3$. Then there exists $Y \subset S \times \mathbb{Z}$ such that $T$ tiles $Y$, and for every $n \in \mathbb{Z}$, $|Y \cap (S \times \{n\})| = 2$.
\end{lemma}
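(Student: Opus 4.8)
The plan is to reduce the statement to a one-dimensional problem and then exhibit an explicit periodic construction. First I would note that every copy of $T$ that can occur inside $S \times \mathbb{Z}$ must be \emph{vertical}, that is, of the form $\{s\} \times (c + T)$ for some $s \in S$ and $c \in \mathbb{Z}$. Indeed, viewing $S \times \mathbb{Z} \subseteq \mathbb{Z}^{d} \times \mathbb{Z}$, a copy of $T$ is an axis-parallel punctured interval, and if it is not parallel to the last coordinate then it lies inside a single slice $S \times \{n\}$, so it would need $2k \geq 6$ collinear points among the $|S| = 3$ points of that slice, which is impossible. Hence, writing $S = \{s_1, s_2, s_3\}$, producing the required $Y$ is equivalent to finding sets $B_1, B_2, B_3 \subseteq \mathbb{Z}$, each a disjoint union of translates of $T$, such that every integer lies in exactly two of them; one then takes $Y = \bigcup_{i=1}^{3} \{s_i\} \times B_i$, tiled by the vertical copies that make up the $B_i$.

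Next I would build the $B_i$ explicitly. A translate of $T$ has $2k$ points, so to get a disjoint union of translates of $T$ of density $2/3$ I would place the centres $3k$ apart. A short computation shows
\[
\bigsqcup_{j \in \mathbb{Z}} (T + 3kj) \;=\; \mathbb{Z} \setminus \big( 3k\mathbb{Z} + D \big), \qquad \text{where} \quad D := \{0\} \cup \{k+1, k+2, \ldots, 2k-1\} ,
\]
and this union is genuinely disjoint, since consecutive centres differ by $3k \geq 2k+1$, which is the least separation of two disjoint translates of $T$. I would then set
\[
B_i \;=\; \mathbb{Z} \setminus \big( 3k\mathbb{Z} + D + k(i-1) \big) \qquad (i = 1,2,3),
\]
so that $B_i$ is the disjoint union of the vertical copies of $T$ in column $s_i$ centred at the heights $3k\mathbb{Z} + k(i-1)$.

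It then remains to check the covering condition, which amounts to the claim that the three \emph{defect sets} $D$, $D+k$ and $D+2k$ partition $\mathbb{Z}/3k\mathbb{Z}$. This is immediate: reduced modulo $3k$ they equal $\{0\} \cup \{k+1, \ldots, 2k-1\}$, $\{k\} \cup \{2k+1, \ldots, 3k-1\}$ and $\{2k\} \cup \{1, \ldots, k-1\}$ respectively, and these are pairwise disjoint with union $\{0, 1, \ldots, 3k-1\}$. Hence for every $n \in \mathbb{Z}$ exactly one of the sets $3k\mathbb{Z} + D + k(i-1)$ contains $n$, so exactly two of $B_1, B_2, B_3$ contain $n$; and each $B_i$ is by construction a disjoint union of translates of $T$. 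Thus $Y = \bigcup_i \{s_i\} \times B_i$ has the desired properties.

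I do not expect a real obstacle once the reduction to three columns is in place; the only delicate point is the \emph{density bookkeeping}. It is natural to try to fill each column with a \emph{string} (a line with every $(k+1)$th point deleted), but a string has density $k/(k+1)$, and three of them cannot leave exactly two of the three columns occupied at each height unless $k+1 = 3$. The fix is not to tile each column completely: giving each column a sparse, periodic family of gaps creates exactly the slack needed, after which aligning the gaps of the three columns is the elementary calculation above. The one step that deserves a carefully written sentence is the observation in the first paragraph that no non-vertical copy of $T$ can occur.
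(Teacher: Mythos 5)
Your construction is correct and is essentially the paper's proof: you place copies of $T$ in each of the three columns periodically with period $3k$ and offsets $0,k,2k$, and your check that the defect sets $D$, $D+k$, $D+2k$ partition $\mathbb{Z}/3k\mathbb{Z}$ is exactly the paper's case-by-case verification of which two columns are occupied at each residue. The preliminary observation that only vertical copies can occur, while true, is not needed, since the lemma only asks for some $Y$ tiled by $T$.
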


\begin{proof}
	Let $S = \{x_1, x_2, x_3\}$. For $i = 1,2,3$, place a copy of $T$ beginning at $\{x_i\} \times \{n\}$ for every $n \equiv ik \pmod {3k}$. The union $Y$ of these tiles has the required property:\\
	For $n \equiv 0, k+1, \ldots, 2k-1 \pmod{3k}$, $Y \cap (S \times \{n\}) = \{x_1, x_3\} \times \{n\}$.\\
	For $n \equiv k, 2k+1, \ldots, 3k-1 \pmod{3k}$, $Y \cap (S \times \{n\}) = \{x_1, x_2\} \times \{n\}$.\\
	For $n \equiv 2k, 1, \ldots, k-1 \pmod{3k}$, $Y \cap (S \times \{n\}) = \{x_2, x_3\} \times \{n\}$.\\
\end{proof}

We will now prove Theorem \ref{kodd}. We know that if $X \subset \mathbb{Z}_{k+1}^2$ has one point in each row or column then $X$ is a hole of size $k+1$. Since $k+1$ is even, we can try to choose $X_n$ in each slice $\mathbb{Z}_{k+1}^2 \times \{n\}$ so that $\bigcup_{n\in\mathbb{Z}}X_n$ is the disjoint union of $\frac{k+1}{2}$ sets $Y_i$ of the form in Lemma \ref{biglemma}.

We can do this as follows:\\
For $n \equiv 0, k+1, \ldots, 2k-1 \pmod{3k}$, let $X_n = \{(0,0),(1,1),\ldots,(k-1,k-1),(k,k)\}$.\\
For $n \equiv k, 2k+1, \ldots, 3k-1 \pmod{3k}$, let $X_n = \{(0,0),(0,1),(2,2),(2,3),\ldots,(k-1,k-1),\newline(k-1,k)\}$.\\
For $n \equiv 2k, 1, \ldots, k-1 \pmod{3k}$, let $X_n = \{(0,1),(1,1),(2,3),(3,3),\ldots,(k-1,k),(k,k)\}$.\\
Then let $X = \bigcup\limits_{n\in\mathbb{Z}} (X_n \times \{n\}) \subset \mathbb{Z}_{k+1}^2 \times \mathbb{Z}$.

Each $X_n$ is a hole, so we can tile $(\mathbb{Z}_{k+1}^2 \times \mathbb{Z})\setminus X$ with strings. Also, $X$ is the disjoint union of sets of the form $Y$ from Lemma \ref{biglemma}: for $0 \leq i \leq \frac{k-1}{2}$, let $S_i = \{(2i,2i),(2i,2i+1),(2i+1,2i+1)\}$. Then $X \cap (S_i \times \mathbb{Z})$ is precisely the set $Y$ generated from $S_i$ in the proof of Lemma \ref{biglemma}. Hence $T$ tiles $X$.

Since $(\mathbb{Z}_{k+1}^2 \times \mathbb{Z})\setminus X$ can be tiled with strings, we can partially tile $\mathbb{Z}^3$ with strings, leaving a copy of $X$ empty in each copy of $\mathbb{Z}_{k+1}^2 \times \mathbb{Z}$. We can tile all of these copies of $X$ with $T$, so $T$ tiles $\mathbb{Z}^3$, completing the proof of Theorem \ref{kodd}.

\section{The general case}

We now move on to general $k$:

\begin{thm}\label{generalk}
	Let $T$ be the tile $\underbrace{\texttt{\emph{XXXXX}}}_{k}\!\texttt{.}\!\underbrace{\texttt{\emph{XXXXX}}}_{k}$. Then $T$ tiles $\mathbb{Z}^4$.
\end{thm}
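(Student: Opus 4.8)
The plan is to mimic the odd case but in one extra dimension, using the freedom of $\mathbb{Z}^4 = \mathbb{Z}_{k+1}^3 \times \mathbb{Z}$ to get around the parity obstruction that appears when $k$ is even. In the odd case we exploited that $k+1$ is even, so a single ``diagonal'' hole of size $k+1$ in $\mathbb{Z}_{k+1}^2$ could be split into $\tfrac{k+1}{2}$ triples, each handled by Lemma \ref{biglemma}. When $k$ is even this fails directly, so instead I would work one dimension up: consider slices isomorphic to $\mathbb{Z}_{k+1}^3$, tile most of each slice by strings (lines in $\mathbb{Z}_{k+1}^3$ minus a point), and leave a controlled hole $X_n \subset \mathbb{Z}_{k+1}^3$ in slice $n$; the holes must be filled by copies of $T$ running in the $\mathbb{Z}$-direction, which by Lemma \ref{biglemma} can be done provided the stack $\bigcup_n (X_n \times \{n\})$ decomposes into sets of the form $S \times \mathbb{Z}$-data with $|S|=3$.

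The key steps, in order: (i) Identify a convenient notion of hole in $\mathbb{Z}_{k+1}^3$. Just as ``one point per row'' gives a hole in $\mathbb{Z}_{k+1}^2$, a set with exactly one point in each line in a fixed coordinate direction is a hole in $\mathbb{Z}_{k+1}^3$; more generally I want holes of size divisible by $3$ so they can be grouped into triples. Since $(k+1)^2$ need not be divisible by $3$, I would instead take a hole consisting of several such ``transversal'' sets, or a union of a transversal with extra structure, arranged so the total size is a multiple of $3$ — for instance take the hole in a single $\mathbb{Z}_{k+1}^2$ sub-slice to have size $3m$ by using $3$ carefully chosen transversals (total size $3(k+1)$, always a multiple of $3$). (ii) Define the $X_n$ periodically in $n$ (period $3k$, matching Lemma \ref{biglemma}), cycling through three configurations $A, B, C$ exactly as in the odd-case construction, chosen so that at each level the symmetric difference pattern pairs up correctly. (iii) Check that each $X_n$ is genuinely a hole, i.e. $\mathbb{Z}_{k+1}^3 \setminus X_n$ tiles by strings — this is where the transversal description is used, since removing one point from each line in one direction leaves exactly a disjoint union of strings. (iv) Check that $\bigcup_n (X_n \times \{n\})$ partitions into triples $S_i \times \mathbb{Z}$ with each piece equal to the $Y$ produced by Lemma \ref{biglemma} from $S_i$; this forces the three configurations $A,B,C$ to satisfy the ``rotating pair'' identities $B \cap C$, $C \cap A$, $A \cap B$ partition into singletons appropriately, exactly as the $S_i = \{(2i,2i),(2i,2i{+}1),(2i{+}1,2i{+}1)\}$ did before. (v) Conclude: partially tile $\mathbb{Z}^4$ by strings leaving a copy of $\bigcup_n(X_n\times\{n\})$ empty in each $\mathbb{Z}_{k+1}^3\times\mathbb{Z}$, then fill those with copies of $T$ via Lemma \ref{biglemma}.

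The main obstacle is step (iv) combined with the divisibility bookkeeping in step (i): I need three explicit subsets $A, B, C$ of $\mathbb{Z}_{k+1}^3$ such that (a) each is a hole, (b) $|A| = |B| = |C|$ and this common size is divisible by $3$, and (c) the pairwise ``differences'' organize into triples of the shape Lemma \ref{biglemma} demands — a triple $\{x, x', x''\}$ where the three configurations restricted to $\{x,x',x''\}$ are $\{x,x''\}$, $\{x,x'\}$, $\{x',x''\}$. In the odd case the diagonal and its two ``perturbations'' did this effortlessly because $k+1$ was even; for even $k$ I expect to need a more clever transversal — perhaps taking the hole to live in a $2$-dimensional sub-torus but using the third coordinate only to fix parity, so that the size becomes $3(k+1)$ regardless of $k \bmod 2$, or alternatively using $k+1$ parallel transversals inside $\mathbb{Z}_{k+1}^3$ and pairing across them. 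Making all three conditions hold simultaneously, with fully explicit $X_n$, is the crux; everything else is the same template as the proof of Theorem \ref{kodd}.
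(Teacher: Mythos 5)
Your approach---strings in the first three coordinate directions of each slice $\mathbb{Z}_{k+1}^3$, with all copies of $T$ that fill the holes running in the fourth direction---cannot work when $k$ is even, and this is exactly the obstruction the paper points out before introducing its new idea. A string has exactly $k$ points, so if $\mathbb{Z}_{k+1}^3 \setminus X_n$ is tiled by strings then $|X_n| = (k+1)^3 - mk$ for some $m$; for even $k$ this is odd for \emph{every} possible hole (and the same is true in $\mathbb{Z}_{k+1}^2$ or any higher-dimensional torus, and for the $(2k+1)$-periodic strings of Gruslys--Leader--Tan). On the other hand, the set $Y$ produced by Lemma \ref{biglemma} meets each slice in exactly $2$ points, so your step (iv), requiring $\bigcup_n (X_n \times \{n\})$ to decompose into such sets, forces every $|X_n|$ to be even. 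Hence no choice of transversals, however clever, can satisfy your conditions (a)--(c): the divisibility-by-$3$ bookkeeping you focus on is not the real constraint (evenness is), your candidate hole size $3(k+1)$ is itself odd for even $k$, and the ``crux'' you flag at the end is genuinely impossible rather than merely delicate.

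The paper's proof gets around this by using strings in only two of the four directions and copies of $T$ in the remaining two. Each slice $\mathbb{Z}_{k+1}^2 \times \mathbb{Z}_{6k} \times \{m\}$ falls into one of two cases: when the Lemma \ref{biglemma} sets leave exactly one point in each $\mathbb{Z}_{k+1}^2$ layer, Proposition \ref{onepoint} tiles the rest of each layer by strings; when they leave $0$ or $2$ points per layer, a second construction (Lemma \ref{otherlemma}) places copies of $T$ in the \emph{third} direction so that, combined with those leftover points, each layer is missing a block of $k+1$ cyclically consecutive points $a_r,\ldots,a_{r+k}$ of a sequence chosen in Proposition \ref{anprop} to form a transversal of rows or columns, hence a hole tileable around by strings. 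Some mechanism of this kind---tiles of $T$ in two directions, not just the last one---is unavoidable for even $k$, so your proposal is missing the essential new idea of the general case.
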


We will assume throughout that $T$ is fixed and $k \geq 3$.

For even $k$, the construction used to prove Theorem \ref{kodd} does not work, as all holes in $\mathbb{Z}_{k+1}^2$ have size $(k+1)^2-mk$ for some $m$, and this is always odd, so we cannot use Lemma \ref{biglemma}. The same is true if we replace 2 with a larger dimension, or if, as in \cite{gltan16}, we use strings in which every $(2k+1)$th point, rather than every $(k+1)$th point, is removed. We will therefore need a new idea.

Instead of using strings in $d-1$ out of $d$ directions, we could only use them in $d-2$ directions and fill the gaps with copies of $T$ in the 2 remaining directions. We will show that this approach works in the case $d = 2$, giving a tiling of $\mathbb{Z}^4$. The strategy will be to produce a partial tiling of each $\mathbb{Z}^3$ slice and use the construction from Lemma \ref{biglemma} to fill the gaps with tiles in the fourth direction.

We will again build partial tilings of $\mathbb{Z}^{2}$, and therefore of higher dimensions, from partial tilings of the discrete torus $\mathbb{Z}_{k+1}^{2}$. The following result is a special case of one proved in \cite{gltan16}:

\begin{prop}\label{onepoint}
	If $x \in \mathbb{Z}_{k+1}^{2}$, then $\mathbb{Z}_{k+1}^{2}\setminus\{x\}$ can be tiled with strings.
\end{prop}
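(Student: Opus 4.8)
The plan is to give a short explicit construction, exploiting the fact noted above that in two dimensions a string is nothing more than a full row or full column of $\mathbb{Z}_{k+1}^2$ with one point deleted. (Since the statement is a special case of a result in \cite{gltan16}, one could also simply cite that; but the two-dimensional case is transparent enough to handle directly, and the construction will be reused later.)

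First I would reduce to a convenient choice of $x$ by symmetry: the torus $\mathbb{Z}_{k+1}^2$ acts on itself by translations, every translation carries rows to rows and columns to columns, and hence carries strings to strings. So without loss of generality we may take $x$ to be the origin; index the rows and columns by $\{0,1,\dots,k\}$, so that $x$ lies in row $0$ and column $0$. The construction is then: let $B$ be the string consisting of column $0$ with the point $x$ removed (this covers exactly the points of column $0$ other than $x$), and for each $b \in \{0,1,\dots,k\}$ let $A_b$ be the string consisting of row $b$ with its point in column $0$ removed (this covers exactly the points of row $b$ in columns $1,\dots,k$). I claim $B, A_0, \dots, A_k$ partition $\mathbb{Z}_{k+1}^2 \setminus \{x\}$: they are pairwise disjoint because $B$ lives in column $0$ while the $A_b$ live in columns $1,\dots,k$ and in distinct rows, and their union is everything except $x$, since column $0$ is covered by $B$ apart from $x$ itself and columns $1,\dots,k$ are covered by the $A_b$. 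As each of these $k+2$ sets is a string, this is the required tiling.

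There is essentially no obstacle here; the only point requiring a moment's care is that a full row or column is \emph{not} itself a string, having $k+1$ points rather than $k$, so the points of one column must be reorganised. The role of the single column-string $B$ is precisely to absorb, from each of the $k+1$ row-strings $A_b$, the one point that $A_b$ fails to cover. A routine count confirms consistency: $\mathbb{Z}_{k+1}^2 \setminus \{x\}$ has $(k+1)^2 - 1 = k(k+2)$ points, and we have used $k+2$ strings of $k$ points each.
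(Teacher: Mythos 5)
Your construction is correct and is essentially the paper's own proof with the roles of rows and columns swapped (and a harmless translation to put $x$ at the origin): the paper covers all but one row with column-strings each missing the point in row $x_2$, then finishes with one row-string through $x$. No gap; the counting and disjointness checks you give are exactly what is needed.
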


\begin{proof}
	Let $x = (x_1,x_2)$, where the first coordinate is horizontal and the second vertical. Since a string is a row or column minus one point, we can place a string $(\{n\} \times \mathbb{Z}_{k+1})\setminus\{(n,x_2)\}$ in each column, leaving only the row $\mathbb{Z}_{k+1} \times \{x_2\}$ empty. Placing the string $(\mathbb{Z}_{k+1} \times \{x_2\})\setminus \{x\}$ in this row completes the tiling of $\mathbb{Z}_{k+1}^{2}\setminus\{x\}$.
\end{proof}

The sets $S$ of size 3 that we will use in Lemma \ref{biglemma} will have 2 points, say $x_1$ and $x_2$, in one $\mathbb{Z}_{k+1}^{2}$ layer and one point, say $x_3$, in another layer. Every layer will contain points from exactly one such set $S$. Let $Y$ be the set constructed from $S$ in the proof of Lemma \ref{biglemma}. In a given slice $\mathbb{Z}^3 \times \{n\}$, there are therefore two cases:
\begin{enumerate}
	\item $Y \cap (S \times \{n\}) = \{x_1, x_3\} \times \{n\}$ or $\{x_2, x_3\} \times \{n\}$.
	\item $Y \cap (S \times \{n\}) = \{x_1, x_2\} \times \{n\}$.
\end{enumerate}

In Case 1, each $\mathbb{Z}_{k+1}^{2}$ layer contains exactly one point of $Y$. $T$ then tiles the rest of the layer by Proposition \ref{onepoint}.

In Case 2, some of the layers contain two points of $Y$, and some of the layers contain no points. Holes of size 0 and 2 do not exist, so we will need copies of $T$ in the third direction to fill some gaps (where $Y$ consists of copies of $T$ in the fourth direction). The following lemma provides us with a way to do this:

\begin{lemma}\label{otherlemma}
	Let $A \subset \mathbb{Z}^d$, $|S| = 3k$. Then there exists $B \subset S \times \mathbb{Z}$ such that $T$ tiles $B$, and
	\[|B \cap (S \times \{n\})| = 
	\begin{cases}
	k+1 & \text{\emph{if} } n \equiv 1, \ldots, k \pmod{2k}\\
	k-1 & \text{\emph{if} } n \equiv k+1, \ldots, 2k \pmod{2k}
	\end{cases}\]
\end{lemma}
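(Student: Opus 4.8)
The plan is to build $B$ out of ``vertical'' copies of $T$ only — one periodic family per point of $S$ — and to tune the periods and phases so that the $3k$ columns add up to the prescribed slice counts. Since $S$ is an arbitrary $3k$-point set, every copy of $T$ inside $S\times\mathbb{Z}$ must in fact be vertical, i.e.\ of the form $\{x\}\times\big(\{a,\dots,a+2k\}\setminus\{a+k\}\big)$ for some $x\in S$ and $a\in\mathbb{Z}$; such a copy adds $1$ to the count of $S\times\{n\}$ for each $n\in\{a,\dots,a+2k\}\setminus\{a+k\}$, and two vertical copies at the same $x$ are disjoint as soon as their starting positions differ by at least $2k+1$.

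First I would split $S$ into $k$ ordered triples $G_1,\dots,G_k$ (this uses $|S|=3k$). To $G_t=\{y,y',y''\}$ I assign the phases $t,\ t+2k,\ t+4k$ modulo $6k$: at $y$ I place the copies of $T$ starting at slices $\equiv t\pmod{6k}$, at $y'$ those starting at slices $\equiv t+2k$, and at $y''$ those starting at slices $\equiv t+4k$. Consecutive copies in a single column are $6k\ge 2k+1$ apart, and copies in different columns are automatically disjoint, so the union $B$ of all these copies lies in $S\times\mathbb{Z}$ and is tiled by $T$.

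The heart of the argument is that the contribution of a single triple $G_t$ to the count of $S\times\{n\}$ depends only on $r:=(n-t)\bmod 2k$. Indeed the three residues of $n$ modulo $6k$ that the three phases of $G_t$ ``see'' are exactly $r,\,r+2k,\,r+4k$, and of these exactly two lie in $A:=\{0,\dots,2k\}\setminus\{k\}$ when $r=0$ (namely $r$ and $r+2k=2k$), none do when $r=k$, and exactly one ($r$ itself) does otherwise. So $G_t$ contributes $1+\delta$, where $\delta=+1$ if $n\equiv t$, $\delta=-1$ if $n\equiv t+k$, and $\delta=0$ otherwise, all modulo $2k$. Summing over $t=1,\dots,k$, and using that $\{1,\dots,k\}$ is a set of distinct residues mod $2k$ whose shift by $k$ is $\{k+1,\dots,2k\}$, gives
\[|B\cap(S\times\{n\})|=k+\#\{t:n\equiv t\bmod 2k\}-\#\{t:n\equiv t+k\bmod 2k\}=\begin{cases}k+1&\text{if }n\bmod 2k\in\{1,\dots,k\}\\k-1&\text{if }n\bmod 2k\in\{k+1,\dots,2k\},\end{cases}\]
which is exactly the claimed formula.

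The one genuinely inventive step — and thus where I expect the main difficulty to lie — is hitting on the right periodic skeleton: period $6k$, with the three phases of each triple forming a coset of $\{0,2k,4k\}$ in $\mathbb{Z}/6k\mathbb{Z}$. That is precisely what forces each triple's contribution to be ``flat'' of period $2k$, deviating from the constant $1$ only at the residues $0$ and $k$; after that, hitting the target reduces to the elementary fact that the square wave $r\mapsto (\text{target at }r)-k$ can be written as $p_r-p_{r-k}$ with $p_r=1$ for $r\in\{1,\dots,k\}$ and $p_r=0$ otherwise, which is arranged by taking the base phases of $G_1,\dots,G_k$ to be $1,2,\dots,k$. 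I do not anticipate any obstacle beyond bookkeeping once the skeleton is chosen; a variant with period $4k$, pairing up $2k$ of the points (with phases $t$ and $t+2k$ modulo $4k$) and leaving the remaining $k$ points empty, works identically and is marginally shorter.
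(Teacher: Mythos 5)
Your proposal is correct and takes essentially the same route as the paper: the paper also builds $B$ from vertical copies of $T$ with period $6k$, giving the triple $\{a_t,a_{t+k},a_{t+2k}\}$ the phases $t,\,t+2k,\,t+4k \pmod{6k}$, and then checks the slice sizes (by listing the six residue ranges mod $6k$ rather than your cleaner per-triple count as a function of $(n-t) \bmod 2k$). The only quibble is your unused aside that every copy of $T$ inside $S\times\mathbb{Z}$ must be vertical, which need not hold for an arbitrary $S$ but plays no role in the argument.
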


\begin{proof}
	Let $A = \{a_1, \ldots, a_{3k}\}$. Then:\\
	For $i = 1, \ldots, k$, place a copy of $T$ beginning at $\{a_i\} \times \{n\}$ for every $n \equiv i \pmod{6k}$.\\
	For $i = k+1, \ldots, 2k$, place a copy of $T$ beginning at $\{a_i\} \times \{n\}$ for every $n \equiv i+k \pmod{6k}$.\\
	For $i = 2k+1, \ldots, 3k$, place a copy of $T$ beginning at $\{a_i\} \times \{n\}$ for every $n \equiv i+2k \pmod{6k}$.\\
	We now observe that the union $B$ of these tiles has the required property.\\
	For $n \equiv 1, \ldots, k \pmod{6k}$, $B \cap (A \times \{n\}) = \{a_{2k+n}, \ldots, a_{3k}, a_1, \ldots, a_n\}$ (size $k+1$).\\
	For $n \equiv k+1, \ldots, 2k \pmod{6k}$, $B \cap (A \times \{n\}) = \{a_1, \ldots, a_k\}\setminus\{a_{n-k}\}$ (size $k-1$).\\
	For $n \equiv 2k+1, \ldots, 3k \pmod{6k}$, $B \cap (A \times \{n\}) = \{a_{n-2k}, \ldots, a_{n-k}\}$ (size $k+1$).\\
	For $n \equiv 3k+1, \ldots, 4k \pmod{6k}$, $B \cap (A \times \{n\}) = \{a_{k+1}, \ldots, a_{2k}\}\setminus\{a_{n-2k}\}$ (size $k-1$).\\
	For $n \equiv 4k+1, \ldots, 5k \pmod{6k}$, $B \cap (A \times \{n\}) = \{a_{n-3k}, \ldots, a_{n-2k}\}$ (size $k+1$).\\
	For $n \equiv 5k+1, \ldots, 6k \pmod{6k}$, $B \cap (A \times \{n\}) = \{a_{2k+1}, \ldots, a_{3k}\}\setminus\{a_{n-3k}\}$ (size $k-1$).
\end{proof}

The reasoning behind this lemma is that there exist sets $X \subset \mathbb{Z}_{k+1}^{2} \times \mathbb{Z}$ that are missing exactly $k+1$ points in every $\mathbb{Z}_{k+1}^{2}$ layer and can be tiled with strings. If we take $d = 2$ in Lemma \ref{otherlemma}, we would like to choose such a set $X$ and a set $A \subset \mathbb{Z}_{k+1}^{2}$ (abusing notation slightly, as $\mathbb{Z}_{k+1}^{2}$ is not actually a subset of $\mathbb{Z}^2$) such that the resulting $B$ in Lemma \ref{otherlemma} is disjoint from $X$. Then $(\mathbb{Z}_{k+1}^{2} \times \mathbb{Z})\setminus(B \cup X)$ contains either 2 or 0 points in each $\mathbb{Z}_{k+1}^{2}$ layer, which is what we wanted.

In order for this construction to work, we need the set $B \cap (A \times \{n\})$ to be a hole whenever it has size $k+1$, and to be a subset of a hole of size $k+1$ whenever it has size $k-1$, so that we actually can tile the required points with strings. By observing the forms of the sets $B \cap (A \times \{n\})$ in the proof of Lemma \ref{otherlemma}, we see that it is sufficient to choose the $a_n$ such that for all $n$, $\{a_n, \ldots, a_{n+k}\}$ is a hole. Here we regard the indices $n$ of the points $a_n$ of $A$ as integers mod $3k$, so $a_{3k+1} = a_1$ and so on. The following proposition says that we can do this.

\begin{prop}\label{anprop}
	There exists a set $A = \{a_1, \ldots, a_{3k}\} \subset \mathbb{Z}_{k+1}^{2}$ such that for all $n$, $\{a_n, \ldots, a_{n+k}\}$ contains either one point in every row or one point in every column. Here the indices are regarded as integers \emph{mod} $3k$.
\end{prop}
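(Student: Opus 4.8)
The plan is to write down an explicit cyclic sequence $a_1, \ldots, a_{3k}$ of points in $\mathbb{Z}_{k+1}^2$ and verify directly that every window of $k+1$ consecutive points (indices mod $3k$) is a ``transversal'' — one point in every row, or one point in every column. The natural idea is to split the cycle of length $3k$ into three blocks of length $k$, and to arrange that each block, together with the preceding or following point, fills up either the rows or the columns. Concretely, I would try: in the first block ($i = 1, \ldots, k$) let $a_i$ sweep out $k$ distinct rows while staying in a bounded set of columns; in the second block sweep out $k$ distinct columns; in the third block again sweep out rows (or columns), chosen so that the block boundaries match up. Because $|\mathbb{Z}_{k+1}^2|$ has $k+1$ rows and $k+1$ columns, a set of $k+1$ points that occupies $k+1$ distinct rows is automatically a hole of the first type, and similarly for columns; a window of length $k+1$ straddling a block boundary will consist of the tail end of one block plus the start of the next, and the construction must be tuned so that this union is still a transversal.

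The key steps, in order, would be: (i) fix a convenient labelling — say rows and columns indexed by $\mathbb{Z}_{k+1} = \{0, 1, \ldots, k\}$ — and define $a_i$ piecewise on the three blocks by explicit formulas (e.g. something like $a_i = (i-1, \, c)$ for a fixed column $c$ in block one, $a_{k+j} = (r, \, j-1)$ for a fixed row $r$ in block two, etc., with the constants chosen to make the seams work); (ii) check the three ``interior'' cases, where a window of length $k+1$ lies within a single block padded by one extra point from a neighbour — here one block contributes $k$ distinct rows (or columns) and the extra point must land in the one missing row (or column); (iii) check the three ``boundary'' cases, where a window of length $k+1$ spans two blocks, say the last $k+1-m$ points of one block and the first $m$ points of the next — this is the delicate case, and it forces constraints on how the sweeping column/row constant of one block relates to the sweeping values at the start of the next. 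Since there are $3k$ windows in total but, by the block structure, only $O(1)$ genuinely distinct patterns up to the cyclic shift, this is a finite check once the formulas are pinned down.

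I expect the main obstacle to be the boundary/seam cases in step (iii): making a single explicit sequence in which every one of the $3k$ length-$(k+1)$ windows is a transversal requires the blocks to interlock cleanly, and a naive ``rows, columns, rows'' split will generically fail at one of the three seams (for instance, a window that contains parts of all three blocks, which can happen since $k+1 > k$). One may need to allow the window type to alternate (some windows are row-transversals, others column-transversals) and choose the three fixed constants — the column held constant in a row-block and the row held constant in a column-block — with care, possibly using that they can coincide with the values swept at the adjacent block's endpoints. A clean way to organise the verification is to list, for each residue of the starting index mod $k$, exactly which rows or columns the window hits, and observe it is always all $k+1$ of them; the construction in the $d=3$, $k$ odd case earlier in the paper (where the $X_n$ were built from diagonal-type transversals) suggests that diagonal or near-diagonal sweeps are the right building blocks here as well.
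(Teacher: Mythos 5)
There is a genuine gap: you have written a plan, not a proof. The statement asks for an explicit set $A$ together with a verification that all $3k$ windows $\{a_n,\ldots,a_{n+k}\}$ are transversals, and your proposal stops exactly at the point where the content lies, namely at step (iii), which you yourself flag as the obstacle. Worse, the tentative explicit formulas you offer in step (i) — axis-parallel sweeps such as $a_i=(i-1,c)$ in one block and $a_{k+j}=(r,j-1)$ in the next — cannot be repaired by a clever choice of the constants $c,r$. Since each block has length $k$ while each window has length $k+1$, every window meets at least two blocks; if it takes $m$ points from an axis-parallel block and $k+1-m$ from the adjacent one with $2\le m\le k-1$, then it hits at most $\max(m+1,\,k+2-m)\le k$ distinct rows and at most $k$ distinct columns, so it is neither a row- nor a column-transversal. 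Thus the failure is not confined to ``one of the three seams'': all straddling windows with a nontrivial split break, and no tuning of the held constants fixes this.

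The missing idea is the one you only gesture at in your last sentence: the blocks must be diagonal (or near-diagonal) sweeps, because a diagonal segment of length $m$ occupies $m$ distinct rows \emph{and} $m$ distinct columns, which is exactly what makes the straddling windows work. The paper's proof does precisely this: it takes $a_n=(n-1,n-1)$ for $n=1,\ldots,k+1$, then $a_n=(n-k-2,n-k-1)$ for $n=k+2,\ldots,2k-1$, then $a_n=(n-k-2,n-2k)$ for $n=2k,2k+1,2k+2$, and $a_n=(n-2k-3,n-2k)$ for $n=2k+3,\ldots,3k$; one then checks that for $n=1,\ldots,2k$ the window $\{a_n,\ldots,a_{n+k}\}$ has one point in every column, and for $n=2k+1,\ldots,3k$ one point in every row. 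To turn your proposal into a proof you would need to commit to such diagonal formulas and actually carry out the finite window-by-window check, which is where all the work in this proposition resides.
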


\begin{proof}
	For $n = 1, \ldots, k+1$, let $a_n = (n-1,n-1)$.\\
	For $n = k+2, \ldots, 2k-1$, let $a_n = (n-k-2,n-k-1)$.\\
	For $n = 2k, 2k+1, 2k+2$, let $a_n = (n-k-2,n-2k)$.\\
	For $n = 2k+3, \ldots, 3k$, let $a_n = (n-2k-3,n-2k)$.\\
	Note that all the $a_n$ are distinct. Let us regard the first coordinate as horizontal and the second as vertical.\\
	Then, for $n = 1, \ldots, 2k$, $\{a_n, \ldots, a_{n+k}\}$ contains one point in every column.\\
	For $n = 2k+1, \ldots, 3k$, $\{a_n, \ldots, a_{n+k}\}$ contains one point in every row.
\end{proof}

From now on, $a_n$ refers to the points defined in the above proof. This proposition is the motivation for choosing the value $6k$ in the proof of Lemma \ref{otherlemma}.

We can now prove Theorem \ref{generalk}. We will need 3 distinct partial tilings of $\mathbb{Z}^3$ slices, corresponding to the 3 cases in the proof of Lemma \ref{biglemma} with $d = 3$. The repeating unit in each of these partial tilings will have size $(k+1) \times (k+1) \times 6k$, so we will work in $\mathbb{Z}_{k+1}^2 \times \mathbb{Z}_{6k}$.

We start by choosing the sets $S$ as in Lemma \ref{biglemma}. These will be as follows:\\
For $n = 1, \ldots, k$, $S_n = \{(0,0,n),(a_n,n+k),(a_{k+1},n+k)\}$.\\
For $n = k+1, \ldots, 2k$, $S_n = \{(0,0,n+k),(a_n,n+2k),(a_{2k+1},n+2k)\}$.\\
For $n = 2k+1, \ldots, 3k$, $S_n = \{(0,0,n+2k),(a_n,n+3k),(a_1,n+3k)\}$.\\
We will refer to the points in $S_n$ as $x_{n,1},x_{n,2},x_{n,3}$ in the order given.

We can construct a set $Y_n \subset \mathbb{Z}^4$ from each $S_n$ using the construction in the proof of Lemma \ref{biglemma}. Let $Y = \bigcup_{1 \leq n \leq 3k} Y_n$. For a given $m \in \mathbb{Z}$, there are two possibilities for the structure of $Y \cap (\mathbb{Z}_{k+1}^2 \times \mathbb{Z}_{6k} \times \{m\})$:
\begin{enumerate}
	\item $Y \cap (\mathbb{Z}_{k+1}^2 \times \mathbb{Z}_{6k} \times \{m\})$ consists of pairs of the form $\{x_{n,1},x_{n,2}\}$ or $\{x_{n,1},x_{n,3}\}$. Then it contains exactly one point in each $\mathbb{Z}_{k+1}^2$ layer. We can therefore tile $(\mathbb{Z}_{k+1}^2 \times \mathbb{Z}_{6k} \times \{m\}) \setminus Y$ entirely with strings, by Proposition \ref{onepoint}.
	\item $Y \cap (\mathbb{Z}_{k+1}^2 \times \mathbb{Z}_{6k} \times \{m\})$ consists of pairs of the form $\{x_{n,2},x_{n,3}\}$. Then it contains either 2 or 0 points in each $\mathbb{Z}_{k+1}^2$ layer.\\
	If $A = \{a_1, \ldots, a_{3k}\}$, and $B$ is the set constructed from $A$ in the proof of Lemma \ref{otherlemma}, then, by the choice of the $S_n$, the sets $B$ and $Y \cap (\mathbb{Z}_{k+1}^2 \times \mathbb{Z}_{6k} \times \{m\})$ are disjoint. Furthermore, if $C$ is the union of these two sets, then, for every $n$, $C \cap (\mathbb{Z}_{k+1}^2 \times \{n\} \times \{m\}) = \{a_r, \ldots, a_{r+k}\}$ for some $r$, and by Proposition \ref{anprop}, this contains either one point in every row or one point in every column and is therefore a hole.\\
	Since $T$ tiles $B$, it also tiles $(\mathbb{Z}_{k+1}^2 \times \mathbb{Z}_{6k} \times \{m\}) \setminus Y$.
\end{enumerate}

$T$ tiles $Y$ by Lemma \ref{biglemma}. Hence $T$ tiles $\mathbb{Z}_{k+1}^2 \times \mathbb{Z}_{6k} \times \mathbb{Z}$, and therefore also $\mathbb{Z}^4$, completing the proof of Theorem \ref{generalk}.

\section{The 4 mod 8 case}

To finish the proof of Theorem \ref{mainthm}, all that remains is to prove the following:

\begin{thm}\label{4mod8}
	Let $T$ be the tile $\underbrace{\texttt{\emph{XXXXX}}}_{k}\!\texttt{.}\!\underbrace{\texttt{\emph{XXXXX}}}_{k}$, with $k \equiv 4 \pmod 8$. Then $T$ tiles $\mathbb{Z}^3$.
\end{thm}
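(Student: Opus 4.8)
The plan is to combine the ideas of the odd case with those of the general case. In the odd case we used Lemma~\ref{biglemma} in $\mathbb{Z}^3$ directly, pairing up the $k+1$ points of a hole in each $\mathbb{Z}_{k+1}^2$ layer into $\tfrac{k+1}{2}$ triples; this fails for even $k$ only because $k+1$ is odd. The key observation for $k \equiv 4 \pmod 8$ is that although $k+1$ is odd, holes in $\mathbb{Z}_{k+1}^2$ of \emph{other} sizes become available, and crucially the parity constraint relaxes. Specifically, I would look for holes in $\mathbb{Z}_{k+1}^2$ of size $(k+1)^2 - mk$ for a suitable $m$: for $k \equiv 4 \pmod 8$, choosing $m$ so that $(k+1)^2 - mk \equiv 0 \pmod 4$ should be possible (here $(k+1)^2$ is odd and $k \equiv 4$, so $(k+1)^2 - mk$ runs through all residues mod $8$ as $m$ varies, in particular hitting $0 \pmod 4$), and then one can split such a hole into quadruples or, better, into triples of the form needed for Lemma~\ref{biglemma} together with leftover points that can themselves be absorbed.

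More precisely, I would proceed as follows. First, establish a supply lemma: for $k \equiv 4 \pmod 8$ there is a hole $H \subset \mathbb{Z}_{k+1}^2$ whose size is divisible by $3$ (or more flexibly, congruent to $0 \pmod 3$ after combining with translated copies in a $\mathbb{Z}_{6k}$-periodic pattern), built by stacking several diagonal/row-type holes of the kind used in Proposition~\ref{anprop} — i.e.\ take several "one point per column" sets in parallel. Concretely, a union of $m$ disjoint such sets is a hole of size $m(k+1)$, and $m(k+1) \equiv 0 \pmod 3$ can be arranged by choosing $m \equiv 0 \pmod 3$ when $3 \nmid k+1$. Second, partition the points of this enlarged hole into triples $S_i$, each of which is eligible for Lemma~\ref{biglemma} with $d = 2$ (two points in one layer, one in an adjacent layer — exactly as in the general case construction, using the points $a_n$). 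Third, run the three-phase $\pmod{3k}$ cycling of Lemma~\ref{biglemma} so that in each $\mathbb{Z}^2 \times \{m\}$ slice the occupied set is, in every layer, of the form $\{a_r,\dots,a_{r+k}\}$ — a hole by Proposition~\ref{anprop} — and hence the complement tiles with strings. The role of the hypothesis $k \equiv 4 \pmod 8$ is exactly to make the relevant size divisible by what we need (ultimately, to make a certain count even, since Lemma~\ref{biglemma}'s three-colouring of a triple's column forces each layer-count to be $2$, and we must consume an even number of "double" layers against the $k+1$ odd budget — $k \equiv 4 \pmod 8$ gives $k/4$ odd, which is what unlocks the pairing).

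The main obstacle I anticipate is the bookkeeping of making everything simultaneously periodic with a common period (some lcm of $3k$, $6k$, and the vertical period of the stacked hole) while ensuring the sets $B$ from Lemma~\ref{otherlemma}, the sets $Y_i$ from Lemma~\ref{biglemma}, and the string tilings from Propositions~\ref{onepoint} and~\ref{anprop} are pairwise disjoint and jointly cover $\mathbb{Z}^3$. This is the same type of difficulty as in Theorem~\ref{generalk} but one dimension tighter, so there is less room to manoeuvre; I would expect to need an explicit, carefully indexed list of the triples $S_n$ (analogous to the displayed list before Theorem~\ref{generalk}'s proof) together with a case check, split by the residue of the slice index mod the common period, that each $\mathbb{Z}_{k+1}^2$ layer ends up either with a single point (handled by Proposition~\ref{onepoint}) or with an $\{a_r,\dots,a_{r+k}\}$-type hole (handled by Proposition~\ref{anprop}). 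Verifying that $k \equiv 4 \pmod 8$ is precisely the congruence that makes this packing close up — and that $k \equiv 0,2,6 \pmod 8$ genuinely obstructs it in dimension $3$ — is the delicate point, and I would isolate it as a short parity computation up front so that the rest of the argument is a routine, if lengthy, adaptation of the general-case construction.
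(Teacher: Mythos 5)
There is a genuine gap, and it is located exactly where you placed your trust: the parity arithmetic that is supposed to ``unlock'' the case $k \equiv 4 \pmod 8$ is wrong. Any hole in $\mathbb{Z}_{k+1}^2$ obtained by removing strings has size $(k+1)^2 - mk$, which for even $k$ is odd minus even, hence always odd; in particular it never hits $0 \pmod 4$, and your claim that $(k+1)^2 - mk$ runs through all residues mod $8$ is false (for $k \equiv 4 \pmod 8$ we have $mk \equiv 0$ or $4 \pmod 8$, so the size is odd for every $m$). Since Lemma~\ref{biglemma} places exactly two points of each triple in each slice, any slice-by-slice construction whose per-slice hole comes from a string tiling of the torus needs that hole to have even size, and this is impossible for every even $k$ --- this is precisely the obstruction stated at the start of Section~3, and it does not depend on $k \bmod 8$. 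Divisibility by $3$ of the hole size is not the relevant constraint either, and invoking Lemma~\ref{otherlemma} does not help inside $\mathbb{Z}^3$: in the four-dimensional proof that lemma consumes its own direction (the third), separate from the direction used by Lemma~\ref{biglemma} (the fourth), and your sketch never explains how both families of tiles could share the single spare direction available in $\mathbb{Z}^3$, nor how the resulting layer counts of $k\pm1$ would be reconciled with the two-points-per-triple requirement.

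The idea your proposal is missing is that for $k \equiv 4 \pmod 8$ one must give up tiling the slices by strings alone. The paper's proof defines three explicit mod-$8$-periodic families of partial diagonals $S_1, S_2, S_3 \subset \mathbb{Z}^2$ and shows that $X_3 = \mathbb{Z}^2 \setminus (S_1 \cup S_2)$ is a union of strings, while $X_1 = \mathbb{Z}^2 \setminus (S_2 \cup S_3)$ and $X_2 = \mathbb{Z}^2 \setminus (S_1 \cup S_3)$ are tiled \emph{directly} by vertical, respectively horizontal, copies of $T$ lying inside the slice. The hypothesis $k \equiv 4 \pmod 8$ enters not as a counting condition on hole sizes but to make $X_2$ invariant under translation by $(k+2,1)$ (one needs $k+2 \equiv -2 \pmod 8$ for the mod-$8$ pattern to shift correctly), which reduces tiling $X_2$ to tiling a single row. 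The triples $\{x_1,x_2,x_3\}$ with $x_i \in S_i$ then feed Lemma~\ref{biglemma} in the third direction, so that in each slice the two uncovered members of each triple are exactly the two sets $S_j$ excluded from the chosen $X_i$. Without this shift away from torus-based holes, the packing you describe cannot close up, for any even $k$.
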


We will prove this by constructing partial tilings of each $\mathbb{Z}^2$ slice and filling in the gaps using the construction from the proof of Lemma \ref{biglemma}. We will define 3 subsets $X_1$, $X_2$, $X_3$ of $\mathbb{Z}^2$ and show that $T$ tiles each of them. However, two of these tilings will not make use of strings.

Let $S_1 = \{(x,x+n(k+1)) \; | \; n \in \mathbb{Z}, x \equiv 2n,2n+1,2n+2,2n+3 \pmod 8\}$.

Let $S_2 = \{(x,x+n(k+1)) \; | \; n\in \mathbb{Z}, x \equiv 2n+4,2n+5,2n+6,2n+7 \pmod 8\}$.

Let $S_3 = \{(x,x+n(k+1)+1) \; | \; n \in \mathbb{Z}, x \equiv 2n+2,2n+3,2n+4,2n+5 \pmod 8\}$.

Let $X_1 = \mathbb{Z}^2 \setminus (S_2 \cup S_3)$, $X_2 = \mathbb{Z}^2 \setminus (S_1 \cup S_3)$, $X_3 = \mathbb{Z}^2 \setminus (S_1 \cup S_2)$.

Let the first coordinate be horizontal and the second vertical.

$X_3$ is $\mathbb{Z}^2$ with every $(k+1)$th diagonal removed, so each row (or column) is $Z$ with every $(k+1)$th point removed, that is, a string. Hence $T$ tiles $X_3$.

We will show that $X_1$ can be tiled with vertical copies of $T$ and $X_2$ can be tiled with horizontal copies of $T$.

Note that $(x,x+n(k+1))+(2,k+3) = (x+2,(x+2)+(n+1)(k+1))$. Also, if $x \equiv 2n+r \pmod 8$, then $x+2 \equiv 2(n+1)+r \pmod 8$. Hence, by the definitions of $S_2$ and $S_3$, we see that $X_1$ is invariant under translation by $(2,k+3)$. To show that vertical copies of $T$ tile $X_1$, it therefore suffices to show that $T$ tiles the columns $X_1 \cap (\{0\} \times \mathbb{Z})$ and $X_1 \cap (\{1\} \times \mathbb{Z})$.

But in fact, if $(0,y) \in S_2$, then $0 \equiv 2n+4$ or $2n+6 \pmod 8$, so $1 \equiv 2n+5$ or $2n+7 \pmod 8$, so also $(1,y+1) \in S_2$. The converse also holds, and the same is true for $S_3$. Thus we only need to check the case $x = 0$.

$(0,n(k+1)) \in S_2$ for $n \equiv 1,2,5,6 \pmod 8$, that is, $n \equiv 1,2 \pmod 4$.

$(0,n(k+1)+1) \in S_3$ for $n \equiv 2,3,6,7 \pmod 8$, that is, $n \equiv 2,3 \pmod 4$.

Therefore $(0,y) \notin X_1$ for $y \equiv k+1, 2(k+1), 2(k+1)+1, 3(k+1)+1 \pmod{4(k+1)}$, so copies of $T$ beginning at positions $1$ and $2(k+1)+2 \pmod{4(k+1)}$ tile $X_1 \cap (\{0\} \times \mathbb{Z})$.

Hence $T$ tiles $X_1$.

Note that $(x,x+n(k+1))+(k+2,1) = (x+k+2,(x+k+2)+(n-1)(k+1))$.\\
Since $k \equiv 4 \pmod 8$, if $x \equiv 2n+r \pmod 8$ then $x+k+2 \equiv 2(n-1)+r \pmod 8$. Hence $X_2$ is invariant under translation by $(k+2,1)$, by the definitions of $S_1$ and $S_3$. To show that horizontal copies of $T$ tile $X_2$, it is therefore enough to show that $T$ tiles the row $X_2 \cap (\mathbb{Z} \times \{0\})$.

We can express $S_1$ as $\{(y-n(k+1),y) \; | \; y \equiv -n,1-n,2-n,3-n \pmod 8\}$.

Similarly $S_3 = \{(y-n(k+1)-1,y) \; | \; y \equiv 3-n,4-n,5-n,6-n \pmod 8\}$.

Therefore $(-n(k+1),0) \in S_1$ for $n \equiv 0,1,2,3 \pmod 8$, and $(-n(k+1)-1,0) \in S_3$ for $n \equiv 3,4,5,6 \pmod 8$.

Hence $(x,0) \notin X_2$ for $x \equiv 0, 2(k+1)-1, 3(k+1)-1, 4(k+1)-1, 5(k+1)-1, 5(k+1), 6(k+1), \newline 7(k+1) \pmod{8(k+1)}$, so copies of $T$ beginning at positions $k+1, 3(k+1), 5(k+1)+1, 7(k+1)+1 \pmod{8(k+1)}$ tile $X_2 \cap (\mathbb{Z} \times \{0\})$.

Hence $T$ tiles $X_2$.

$S_1 \cup S_2 \cup S_3$ can be partitioned into sets of the form $S = \{x_1, x_2, x_3\}$, where $x_1 = (x,y) \in S_1$, $x_2 = (x+4,y+4) \in S_2$, $x_3 = (x+2,y+3) \in S_3$. Then $|S| = 3$, so we can construct the corresponding set $Y \subset \mathbb{Z}^3$ as in Lemma \ref{biglemma}. Now, given $n \in \mathbb{Z}$, $(S \times \{n\}) \setminus Y = \{x_i\}$ for some $i \in \{1,2,3\}$. Then $Y \cap (X_i \times \{n\}) = \emptyset$. If we do this for all such sets $S$, and let $U$ be the (disjoint) union of the resulting sets $Y$, then $U \cap (X_i \times \{n\}) = \emptyset$, and $\mathbb{Z}^2 \times \{n\} \subset U \cup (X_i \times \{n\})$. Recall that $T$ tiles each $Y$ and therefore $U$.

We can do this for every $n$, choosing a partial tiling $X_i$ for the corresponding $\mathbb{Z}^2$ layer. Together with $U$, these form a tiling of $\mathbb{Z}^3$ by $T$. This completes the proof of Theorem \ref{4mod8}, and therefore also the proof of Theorem \ref{mainthm}.

\section{Open problems}

Theorem \ref{mainthm}, together with the result that a punctured interval $T = \underbrace{\texttt{XXXXX}}_{k}\!\texttt{.}\!\underbrace{\texttt{XXXXX}}_{k}$ does not tile $\mathbb{Z}^2$ for $k \geq 3$, determines the smallest dimension $d$ such that $T$ tiles $\mathbb{Z}^d$ in the cases $k$ odd and $k \equiv 4 \pmod 8$. However, for other values of $k$, it is still unknown whether the smallest such dimension $d$ is 3 or 4:

\begin{question}
	Let $T$ be the punctured interval $\underbrace{\texttt{\emph{XXXXX}}}_{k}\!\texttt{.}\!\underbrace{\texttt{\emph{XXXXX}}}_{k}$, where $k \equiv 0, 2, 6 \pmod 8$, $k \geq 6$. Does $T$ tile $\mathbb{Z}^3$?
\end{question}

It is also natural to consider more general tiles. The next non-trivial case is that of an interval with a non-central point removed. One might wonder if there is an analogue of Theorem \ref{mainthm} for these tiles:

\begin{question}
	Does there exist a number $d$ such that, for any tile $T$ consisting of an interval in $\mathbb{Z}$ with one point removed, $T$ tiles $\mathbb{Z}^d$?
\end{question}

For general one-dimensional tiles, Gruslys, Leader and Tan \cite{gltan16} conjectured that there is a bound on the dimension in terms of the size of the tile:

\begin{conj}[Gruslys, Leader, Tan \cite{gltan16}]
	For any positive integer $t$, there exists a number $d$ such that any tile $T \subset \mathbb{Z}$ with $|T| \leq t$ tiles $\mathbb{Z}^d$.
\end{conj}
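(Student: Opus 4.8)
The plan is to run the ``strings plus gap-filling'' scheme of this paper in a form where the final dimension is controlled by $t$ alone. Recall the two features that make Theorem \ref{mainthm} uniform in $k$: a periodic \emph{bulk} structure (the strings living in the torus $\mathbb{Z}_{k+1}^2$) that is a disjoint union of copies of $T$ and leaves only a small, structured defect in each layer, and the defect-correction constructions of Lemmas \ref{biglemma} and \ref{otherlemma}, which consume a \emph{fixed} number of extra coordinate directions no matter how large $k$ is. Indeed the dimension $4$ is exactly (torus rank $2$) $+$ (number of correction directions $2$), and neither summand grows with $k$. My aim is to produce, for an arbitrary $T$ with $|T|\le t$, analogues of both features whose parameters -- the rank of the bulk torus and the number of correction directions -- are bounded by a function of $t$ alone, and then assemble them exactly as in the proof of Theorem \ref{generalk}. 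I will also use the clean observation implicit in that proof: since a torus-times-space $\mathbb{Z}_b\times\mathbb{Z}^{d}$ tiles $\mathbb{Z}^{d+1}$ by translation along the first coordinate, it suffices throughout to tile a product of a few bounded tori with a bounded number of free $\mathbb{Z}$ directions.

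First I would set up the general slicing framework: view $\mathbb{Z}^d$ as copies of $\mathbb{Z}^{d-c}$ stacked along $c$ extra directions, partially tile most of each slice by a periodic bulk structure, and fill the leftover gaps with copies of $T$ running in the $c$ extra directions, using a verbatim generalization of Lemma \ref{biglemma} (and, where the per-layer defect is larger, of Lemma \ref{otherlemma}). The content of those lemmas is purely that a set $S$ of the appropriate size produces a periodic subset of $S\times\mathbb{Z}$ tiled by $T$ with prescribed per-layer occupancy; the proofs make no use of the structure of $T$, so the number $c$ of correction directions is governed only by how many distinct ``defect patterns'' the bulk leaves uncorrected. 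The task is therefore to bound this number by a function of $t$.

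Second, and this is the genuinely new ingredient, I would construct the generalized bulk. Let $I\supset T$ be the interval hull of $T$, of length $L$, and recall $I$ tiles $\mathbb{Z}$. Laying copies of $I$ and replacing each by a copy of $T$ leaves holes at the translates of $I\setminus T$; the difficulty is that $|I\setminus T|$ can be as large as $L-t$, so a one-shot correction would need roughly $L$ extra directions. The route I would pursue to defeat this is a digit-based splitting $\mathbb{Z}\cong\mathbb{Z}_b\times\mathbb{Z}$ (carry in base $b$), which should realize $T$ inside a product whose free $\mathbb{Z}$-diameter is reduced by a factor $b$ at the cost of one bounded torus factor $\mathbb{Z}_b$. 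The goal is to show that after a number of such foldings depending only on $t$ -- not on $L$ -- the residual structure has per-slice defect of complexity bounded in terms of $t$, so the correction machinery of the first step applies. This would reduce the conjecture to the bounded-diameter case: once $\mathrm{diam}(T)$ is at most some $f(t)$, there are only finitely many tiles of size $\le t$ up to translation and reflection, the theorem of Gruslys, Leader and Tan \cite{gltan16} assigns each a dimension, and the maximum over this finite list is a uniform dimension for the base case, which is all that is required.

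The main obstacle is precisely this splitting step, and it has two intertwined parts. The first is that translation and reflection of $T$ in $\mathbb{Z}$ do not correspond to translation and reflection of the digit-image in $\mathbb{Z}_b\times\mathbb{Z}$, because of carries, so one must either choose the folding to respect the allowed symmetries or argue the tiling directly in the folded space. The second, and deeper, part is uniformity: showing that the number of foldings -- equivalently the combined rank of the auxiliary tori and the number of correction directions -- stays bounded in terms of $t$ rather than accumulating with $L$. For the punctured interval the single torus $\mathbb{Z}_{k+1}^2$ together with Lemmas \ref{biglemma} and \ref{otherlemma} achieve exactly this, and the reason it works is the interval-like symmetry of $T$, which guarantees a genuine string; for a general $T$ no string need exist, since $T$ may fail to tile $\mathbb{Z}$ at all, and the defect must instead be absorbed self-similarly. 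Overcoming this likely requires a folding whose defect does not grow across iterations, or a reformulation as a statement about tiling a finite abelian group of bounded rank, of the form $\mathbb{Z}_b^{m}\times\mathbb{Z}_N$ with $m$ bounded, by copies of $T$ under the allowed symmetries, where the uniformity in $t$ might be attacked by Fourier or mass-transference methods. I expect essentially all the difficulty of the conjecture to be concentrated here.
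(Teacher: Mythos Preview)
The statement you are attempting is Conjecture~12, which the paper explicitly records as \emph{unresolved}; there is no proof in the paper to compare against. Your write-up is not a proof either, and you say as much in the final paragraph: the ``splitting step'' where the diameter of $T$ is reduced while keeping the number of auxiliary directions bounded in $t$ is exactly the content of the conjecture, and you leave it open.

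Two concrete points where the sketch overstates what is available. First, your assertion that ``the proofs [of Lemmas~\ref{biglemma} and~\ref{otherlemma}] make no use of the structure of $T$'' is false. Both proofs place copies of $T$ at carefully chosen residues modulo $3k$ or $6k$ and then verify the per-layer occupancy by an explicit case check; every line of those checks uses that $T$ occupies positions $\{0,\dots,k-1,k+1,\dots,2k\}$. For an arbitrary $T\subset\mathbb{Z}$ there is no reason an analogue of Lemma~\ref{biglemma} should hold at all: given $|S|=3$, asking for a $T$-tiled $Y\subset S\times\mathbb{Z}$ with two points per layer already imposes arithmetic constraints relating $|T|$ and the gap pattern of $T$, and for many $T$ no such $Y$ exists. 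So the ``correction machinery'' is not a black box you can invoke once the bulk is built; it is itself a punctured-interval-specific construction.

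Second, the digit-folding $\mathbb{Z}\cong\mathbb{Z}_b\times\mathbb{Z}$ does not interact with tilings in the way you suggest. A translate of $T$ in $\mathbb{Z}$ is not sent to a translate of the image of $T$ in $\mathbb{Z}_b\times\mathbb{Z}$ (carries destroy this), and reflections fare no better, as you note; but you then proceed as if this were a surmountable technicality. It is not: the folding does not take copies of $T$ to copies of anything fixed, so it does not reduce the problem to a bounded-diameter tile. Your fallback, ``argue the tiling directly in the folded space,'' is just restating the original problem. The reduction to finitely many tiles of bounded diameter, which is where you invoke \cite{gltan16} to finish, therefore never gets off the ground.

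In short: the paper offers no proof because none is known, and your proposal identifies a plausible architecture but does not supply the missing idea.
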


This conjecture remains unresolved. The authors of \cite{gltan16} showed that if $d$ always exists then $d \to \infty$ as $t \to \infty$, by exhibiting a tile of size $3d-1$ that does not tile $\mathbb{Z}^d$. This gives a simple lower bound on $d$; better bounds would be of great interest.

\section*{Acknowledgements}

I would like to thank Vytautas Gruslys for suggesting this problem and for many helpful discussions, and Imre Leader for his encouragement and useful comments.

\vspace{5mm}
\noindent
Harry Metrebian\\
Trinity College\\
Cambridge\\
CB2 1TQ\\
United Kingdom

\vspace{2mm}
\noindent
rhkbm2@cam.ac.uk

\end{document}